\def\cA{{\mathcal A}}   \def\cB{{\mathcal B}}   
      \def\cI{{\mathcal I}}
\newcommand\al{\alpha}
\newcommand\bt{\beta}
\newcommand\vp{\varphi}
\newcommand\wt{\widetilde}
\newcommand\Y{Y} 
\renewcommand\L{L}
\newcommand\M{M}
\theoremstyle{plain}
\newtheorem{thm}{Theorem}[section]
\newtheorem{prop}[thm]{Proposition}
\newtheorem{rem}[thm]{Remark}
\numberwithin{equation}{section}
\title{\bf $R_I$ biorthogonal polynomials of Hahn type}
\renewcommand*{\Affilfont}{\normalsize\small}
\author[1]{Luc Vinet}
\author[2]{Meri Zaimi}
\author[3]{Alexei Zhedanov\vspace{.5em}}
\affil[1,2]{Centre de Recherches Math\'ematiques, Universit\'e de Montr\'eal, P.O. Box 6128, Centre-ville Station, Montr\'eal (Qu\'ebec), H3C 3J7, Canada. \vspace{.5em}}
\affil[1]{Insitut de valorisation des donn\'ees (IVADO), Montr\'eal (Qu\'ebec), H2S 3H1, Canada. \vspace{.5em}}
\affil[3]{School of Mathematics, Renmin University of China, Beijing 100872, China. \newline\vspace{.9em}}
	\renewcommand\AB@affilsepx{: \protect\Affilfont}
	\affil[ ]{E-mail addresses}
	\renewcommand\AB@affilsepx{, \protect\Affilfont}
	\affil[1]{luc.vinet@umontreal.ca}
	\affil[2]{meri.zaimi@umontreal.ca}
	\affil[3]{zhedanov@yahoo.com}
\begin{document}
	
\date{\today} 
\maketitle

\begin{abstract}
	A finite family of $R_I$ polynomials is introduced and studied. It consists in a set of polynomials of $_{3}F_{2}$ form whose biorthogonality to an ensemble of rational functions is spelled out. These polynomials are shown to satisfy two generalized eigenvalue problems: in addition to their recurrence relation of $R_I$ type, they are also found to obey a difference equation. Underscoring this bispectrality is a triplet of operators with tridiagonal actions.  
\end{abstract}


\maketitle

\section{Introduction}\label{sec:intro}

The purpose of this paper is to study the biorthogonality and bispectrality of a finite family of hypergeometric polynomials of Hahn type which can be viewed as discrete analogues of the biorthogonal Askey polynomials on the unit circle.

Biorthogonal rational functions (BRF's) with bispectral properties provide a natural extension to the classical orthogonal polynomials (OP's) of the ($q$-)Askey scheme. Indeed, all families of classical OP's are bispectral, meaning that these polynomials satisfy both a three-term recurrence relation and a difference or differential equation \cite{Koek}. This bispectrality can be understood algebraically in terms of two eigenvalue problems involving a pair of operators, known as a Leonard pair in the finite-dimensional case \cite{Ter}. The algebra generated by these bispectral operators is, in its most general form, the Askey--Wilson algebra \cite{Zhe91}. The OP's can then be viewed as solutions of two eigenvalue problems posited in terms of these operators or as overlap coefficients when studying the representation theory of the associated algebra.  

The BRF's appear when considering instead generalized eigenvalue problems (GEVP's) of the form $LP(x)=\lambda MP(x)$, where $L$ and $M$ are two operators which both act tridiagonally in a certain basis \cite{Zhe99}. The bispectrality in this case occurs when the BRF's are solutions to two such GEVP's involving a triplet of operators. Different systems of bispectral biorthogonal functions have been studied in previous works. The rational functions of Hahn type are examined in \cite{TVZ}, the representation theory of their associated algebra in \cite{VZ_Hahn} and their $q$-deformation in \cite{BGVZ}. Similarly, in \cite{VZ_Ask}, an algebraic approach is used to understand the biorthogonality and bispectrality of the Askey polynomials on the unit circle. In the present paper, we pursue this program by examining some polynomials of ${_3}F_2$ type which are biorthogonal on a real linear grid. These functions can be viewed as a discrete version of the Askey polynomials on the unit circle, as will be explained. They will be called $R_I$ polynomials of Hahn type in view of the recurrence relation they satisfy.                   

The paper is organized as follows. Section \ref{sec:tripop} introduces three difference operators that act in a tridiagonal fashion on a certain polynomial basis. A GEVP involving two of these operators is considered in Section \ref{sec:GEVP}. Its solutions define a finite set of polynomials expressed as ${_3}F_2$ hypergeometric functions, the $R_I$ polynomials of Hahn type. The biorthogonality relation for these polynomials is obtained in Section \ref{sec:biorth} through the adjoint GEVP. The biorthogonal partners are explicitly given by ${_3}F_2$ rational functions. The bispectral properties of the $R_I$ polynomials of Hahn type are obtained in Section \ref{sec:bispect} using the action of the triplet of operators. Section \ref{sec:HahnOP} establishes a connection between the $R_I$ biorthogonal polynomials of Hahn type and the Hahn classical orthogonal polynomials. This allows to obtain an explicit expression for the normalization constant in the biorthogonality relation of the $R_I$ polynomials. A connection with $_{10} \phi_9$ basic hypergeometric biorthogonal rational functions is also exposed in Section \ref{sec:biorthlim}. Section \ref{sec:conc} contains concluding remarks.         

\section{A triplet of difference operators}\label{sec:tripop}
We start by introducing three operators that will be central in the study of the properties of the $R_I$ biorthogonal polynomials of Hahn type.

Let $N$ be a non-negative integer and $\al,\bt$ some real parameters. We define the following operators acting on the space of polynomials on the linear grid $x=0,1,\dots,N$
\begin{align}
	&\L^{(\al,\bt)}= (N-x)T^+ +(x-N-\al-1)\mathcal{I}, \label{eq:L_op} \\
	&\M^{(\alpha,\beta)}= A(x)T^+ + B(x)T^- + C(x) \mathcal{I} \label{eq:M_op},\\
	&\Y^{(\al,\bt)} = x(N-x)T^+ +x(x-N-\al-1)\mathcal{I}, \label{eq:Y_op}
\end{align}
where
\begin{equation}
	A(x) = (N-x)(x-\beta+1), \quad B(x)=x(\alpha+\beta+N+1-x), \quad C(x) = -(A(x)+B(x)). \label{eq:ABC}
\end{equation}
The notations $T^{\pm}$ and $\cI$ refer respectively to the shift and identity operators which act as  
\begin{equation}
	T^{\pm} f(x) = f(x \pm 1), \quad \cI f(x) = f(x).
\end{equation}
One observes from the definitions that
\begin{equation}
	\Y^{(\al,\bt)} = x \L^{(\al,\bt)}. \label{eq:YxL}
\end{equation}
From now on, we will not write explicitly the dependency of the operators on the parameters $\al,\bt$ except when necessary.

A basis for the space of polynomials of degree less or equal to $N$ is 
\begin{equation}
	\vp_n(x) =(-x)_n \quad \text{for } n=0,1,\dots,N,
	\label{eq:basis_phi}
\end{equation}
where
\begin{equation}
	(a)_0:=1, \quad (a)_n := a(a+1) \dots (a+n-1) \quad \text{for } n=1,2,\dots \label{eq:Pochh}
\end{equation}
is the Pochhammer symbol (or shifted factorial). We will also use the conventions $\vp_{-1}(x)=\vp_{N+1}(x)=0$. The action of the operators \eqref{eq:L_op}--\eqref{eq:Y_op} on the basis \eqref{eq:basis_phi} is, for $n=0,1,\dots,N$,
\begin{align}
	&\L \vp_n(x) = \eta_n^{(1)} \vp_n(x) + \eta_n^{(2)} \vp_{n-1}(x) \label{eq:L_phi}, \\
	&\M \vp_n(x) = \eta_n^{(3)} \vp_n(x) + \eta_n^{(4)} \vp_{n-1}(x) \label{eq:M_phi},\\
	&\Y \vp_n(x) = - \eta_n^{(1)} \vp_{n+1}(x) +\eta_n^{(5)}\vp_n(x) + \eta_n^{(6)}\vp_{n-1}(x), \label{eq:Y_phi}
\end{align} 
where
\begin{alignat}{2}
	&\eta_n^{(1)} = -(n+\al+1), \qquad && \eta_n^{(2)} = n(n-N-1), \\ &\eta_n^{(3)} = -n(n+\al+1), \qquad && \eta_n^{(4)} = n(n-\beta)(n-N-1),\\
	&\eta_n^{(5)} = -n(2n+\al-N), \qquad && \eta_n^{(6)} = n(n-1)(n-N-1).
\end{alignat} 
It follows that these operators are represented by tridiagonal $(N+1)\times (N+1)$ matrices in the basis $\{ \vp_n\}_{n=0}^N $, with $\L$ and $\M$ upper bidiagonal:
\begin{align}
	\L &= 
	\begin{pmatrix}
		\eta_0^{(1)} & \eta_1^{(2)} & 0 & \cdots & 0 \\
		0            & \eta_1^{(1)} & \eta_2^{(2)} & & \vdots \\
		0 & 0 & \eta_2^{(1)} & \ddots & 0 \\ 
		\vdots & & \ddots & \ddots & \eta_N^{(2)} \\
		0 & \cdots & 0 & 0 & \eta_{N}^{(1)}
	\end{pmatrix}, \label{eq:L_mat} \\[0.5em]
	\M &= 
	\begin{pmatrix}
		\eta_0^{(3)} & \eta_1^{(4)} & 0 & \cdots & 0 \\
		0            & \eta_1^{(3)} & \eta_2^{(4)} & & \vdots \\
		0 & 0 & \eta_2^{(3)} & \ddots & 0 \\ 
		\vdots & & \ddots & \ddots & \eta_N^{(4)} \\
		0 & \cdots & 0 & 0 & \eta_{N}^{(3)}
	\end{pmatrix}, \label{eq:M_mat} \\[0.5em]
	\Y &=
	\begin{pmatrix}
		\eta_0^{(5)} & \eta_1^{(6)} & 0 & \cdots & 0 \\
		-\eta_0^{(1)}            & \eta_1^{(5)} & \eta_2^{(6)} & & \vdots \\
		0 & -\eta_1^{(1)} & \eta_2^{(5)} & \ddots & 0 \\ 
		\vdots & & \ddots & \ddots & \eta_N^{(6)} \\
		0 & \cdots & 0 & -\eta_{N-1}^{(1)} & \eta_{N}^{(5)}
	\end{pmatrix}. \label{eq:Y_mat}
\end{align}

\section{$R_I$ polynomials of Hahn type as solutions of a generalized eigenvalue problem}\label{sec:GEVP}

In this section, we show that $R_I$ polynomials of Hahn type arise as solutions of a GEVP involving the operators $\M$ and $\L$. Throughout this paper, the notation ${_r}F_s$ will be used to refer to the standard hypergeometric function, see \cite{GR} for instance.
\begin{prop}
	The GEVP 
	\begin{equation}
		\M P_n(x) = \lambda_n \L P_n(x)  \label{eq:GEVP_ML}
	\end{equation}
	is solved by the polynomials
	\begin{equation}
		P_n(x;\al,\bt,N) = {_3}F_2 \left( {-n, -x, \alpha+1 \atop -N, 1-\beta-n } ;1\right) \label{eq:Pn} 
	\end{equation}
	with eigenvalues
	\begin{equation}
		\lambda_n=n, \label{eq:lambda}
	\end{equation}
	for $n=0,1,\dots,N$.
\end{prop}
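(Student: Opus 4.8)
The plan is to verify everything directly in the basis $\{\vp_k\}_{k=0}^N$ of \eqref{eq:basis_phi}, in which, by \eqref{eq:L_phi}--\eqref{eq:M_phi}, both $\L$ and $\M$ act as upper bidiagonal matrices. Reading off the terminating series \eqref{eq:Pn}, I would first write
\[
P_n(x) = \sum_{k=0}^{n} c_k^{(n)}\,\vp_k(x), \qquad c_k^{(n)} = \frac{(-n)_k\,(\al+1)_k}{(-N)_k\,(1-\bt-n)_k\,k!},
\]
so that $c_0^{(n)}=1$ and (for generic $\al,\bt$) $c_k^{(n)}\neq 0$ for $0\le k\le n$, while $c_k^{(n)}=0$ for $k>n$; in particular $P_n$ is a polynomial of degree exactly $n$ lying in the span of $\vp_0,\dots,\vp_n$, as it should be.

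Next I would apply $\M-\lambda_n\L$ termwise. From \eqref{eq:L_phi}--\eqref{eq:M_phi},
\[
(\M-\lambda_n\L)\vp_k = \bigl(\eta_k^{(3)}-\lambda_n\eta_k^{(1)}\bigr)\vp_k + \bigl(\eta_k^{(4)}-\lambda_n\eta_k^{(2)}\bigr)\vp_{k-1},
\]
so that $\M P_n = \lambda_n \L P_n$ is equivalent to the family of scalar conditions obtained by collecting the coefficient of each $\vp_k$:
\[
c_k^{(n)}\bigl(\eta_k^{(3)}-\lambda_n\eta_k^{(1)}\bigr) + c_{k+1}^{(n)}\bigl(\eta_{k+1}^{(4)}-\lambda_n\eta_{k+1}^{(2)}\bigr) = 0, \qquad k=0,1,\dots,n.
\]
The $k=n$ equation reduces to $c_n^{(n)}\bigl(\eta_n^{(3)}-\lambda_n\eta_n^{(1)}\bigr)=0$ since $c_{n+1}^{(n)}=0$, and because $\eta_n^{(3)} = -n(n+\al+1) = n\,\eta_n^{(1)}$ this forces $\lambda_n = n$: this is exactly the choice that annihilates the leading coefficient, and it is how the eigenvalue \eqref{eq:lambda} is pinned down. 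The remaining equations ($0\le k\le n-1$) then determine the $c_k^{(n)}$.

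It thus remains to check those equations with $\lambda_n=n$. A short computation gives
\[
\eta_k^{(3)}-n\eta_k^{(1)} = (n-k)(k+\al+1), \qquad \eta_{k+1}^{(4)}-n\eta_{k+1}^{(2)} = (k+1)(k-N)(k+1-\bt-n),
\]
so each condition is the two-term recurrence
\[
\frac{c_{k+1}^{(n)}}{c_k^{(n)}} = \frac{(k-n)(k+\al+1)}{(k+1)(k-N)(k+1-\bt-n)},
\]
which is precisely the ratio of consecutive coefficients read off from the ${_3}F_2$ in \eqref{eq:Pn}. Since this recurrence together with the normalization $c_0^{(n)}=1$ (equivalently $P_n(0)=1$) determines the coefficients uniquely, the polynomial \eqref{eq:Pn} is the (unique up to scalar) solution of \eqref{eq:GEVP_ML} with $\lambda_n=n$. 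I do not expect a genuine obstacle: the only care needed is bookkeeping — confirming that no denominator factor vanishes over the range $0\le k\le n-1\le N-1$ (so in particular $k-N\ne 0$) and noting the genericity of $\al,\bt$ under which the $c_k^{(n)}$ are well defined — the whole argument resting on the bidiagonality of $\L$ and $\M$, which collapses the GEVP to a first-order recurrence solved by the stated hypergeometric polynomial.
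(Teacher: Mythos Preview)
Your proof is correct and follows essentially the same approach as the paper: both exploit the upper-bidiagonal actions \eqref{eq:L_phi}--\eqref{eq:M_phi} to reduce the GEVP to a two-term recurrence on the coefficients in the basis $\{\vp_k\}$, identify $\lambda_n=\eta_n^{(3)}/\eta_n^{(1)}=n$ from the leading term, and then recognize the resulting ratio $c_{k+1}/c_k$ as that of the terminating ${_3}F_2$. The only cosmetic difference is that you start from \eqref{eq:Pn} and verify, whereas the paper derives the coefficients from the recurrence and then assembles the hypergeometric expression.
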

\begin{proof}
	First, the eigenvalues $\lambda_n$ in \eqref{eq:GEVP_ML} are given by the roots of the determinant $|M-\lambda L|$, which is a polynomial of degree $N+1$ in $\lambda$. From the bidiagonal form of the operators $\L$ and $\M$ given in \eqref{eq:L_mat} and \eqref{eq:M_mat}, one finds that the possible values for $\lambda_n$ are
	\begin{equation}
		\lambda_n = \frac{\eta^{(3)}_n}{\eta^{(1)}_n} = n, \quad \text{for } n=0,1,\dots,N,
	\end{equation}
	which shows \eqref{eq:lambda}. The associated eigenfunctions $P_n(x)$ can be expanded in the basis \eqref{eq:basis_phi} as
	\begin{equation}
		P_n(x) = \sum_{k=0}^N c_{n,k} \vp_k(x) \label{eq:exp_Pvp}
	\end{equation}
	for some coefficients $c_{n,k}$ to be determined. Using \eqref{eq:lambda}, \eqref{eq:exp_Pvp} as well as the actions \eqref{eq:L_phi} and \eqref{eq:M_phi} of the operators $\L$ and $\M$, one finds from the GEVP \eqref{eq:GEVP_ML} that the coefficients $c_{n,k}$ satisfy the two-term recurrence relation
	\begin{equation}
		(k+1)(k-N)(k+1-\beta-n)c_{n,k+1} = (k-n)(k+\al+1)c_{n,k}, \quad \text{for } k=0,1,\dots,N,
		\label{eq:rec_W}
	\end{equation}
	with $c_{n,N+1}:=0$.
	The solution to this recurrence relation is 
	\begin{equation}
		c_{n,k} = \frac{(-n)_k(\al+1)_k}{k!(-N)_k(1-\beta-n)_k}c_{n,0}, \qquad \text{for } k=1,\dots,N. \label{eq:c_nk}
	\end{equation}
	We will chose for convenience $c_{n,0}=1$.
	Substituting \eqref{eq:c_nk} and \eqref{eq:basis_phi} in \eqref{eq:exp_Pvp}, one arrives at the solution \eqref{eq:Pn}. 
\end{proof}

The polynomials $P_n(x;\al,\bt,N)$ will be the object of study of the remaining of this paper. Let us remark that they correspond to a discrete version of the Askey polynomials \cite{Ask}. Indeed, by performing the replacement $x \to Nx$ in the explicit expression \eqref{eq:Pn} and then taking the limit $N\to \infty$, one recovers the continuous Askey polynomials:
\begin{equation}
	\lim_{N \to \infty }P_n(Nx;\alpha,\beta,N) =
		{_2}F_1 \left( {-n, \alpha+1 \atop 1-\beta-n } ;x\right) =: \Tilde{P}_n(x;\alpha,\beta). \label{eq:limAskpol}
\end{equation}
The polynomials $P_n(x;\al,\bt,N)$ will be called $R_I$ polynomials of Hahn type because of the recurrence relation that they satisfy, see Section \ref{sec:bispect}. Note that we write $P_n(x)=P_n(x;\alpha,\beta,N)$ for simplicity when the explicit dependency of the polynomials on the parameters is not necessary.

\section{Biorthogonality}\label{sec:biorth}
We now obtain the biorthogonality property of the $R_I$ polynomials $P_n(x;\al,\bt,N)$ using the adjoint of the GEVP considered in the previous section.

Let us define a scalar product on the space of rational functions on the linear grid $x=0,1,\dots,N$ depending on some parameters $\al,\bt$ by
\begin{equation}
	(f(x),g(x))_{(\al,\bt)} =  \sum_{x=0}^N w_x{(\al,\bt)} f(x)g(x), \label{eq:scalarprod}
\end{equation}
with the weight function $w_x{(\al,\bt)}$ given by
\begin{equation}
	w_x{(\al,\bt)}= \frac{(-\alpha-\beta-N)_N}{(-\alpha-N)_N}\frac{(-N)_x (-\beta)_x}{x! (-\alpha-\beta-N)_x}. \label{eq:w_x}
\end{equation}
Note that the normalization for this weight function is chosen such that
\begin{equation}
	\sum_{x=0}^N w_x{(\alpha,\beta)} = 1,
\end{equation}
as can be verified using the Chu--Vandermonde formula ${_2}F_1 (-n, b;c;1 ) = (c-b)_n/(c)_n$, see for \textit{e.g.}\ \cite{Koek}. If $X$ is an operator acting on this space of rational functions, then its adjoint is defined as the operator $X^*$ that satisfies
\begin{equation}
	(f(x),Xg(x))_{(\al,\bt)}=(X^*f(x),g(x))_{(\al,\bt)}.
\end{equation}

We can now consider the adjoint of the GEVP \eqref{eq:GEVP_ML}:
\begin{equation}
	M^* P^*_n(x) = n L^* P^*_n(x). \label{eq:GEVP_MLad}
\end{equation}
The notation $P^*_n(x)$ refers to the functions that solve the adjoint GEVP. The interest of considering \eqref{eq:GEVP_MLad} is given by the following known result that we state without reproducing the proof.
\begin{prop}\label{prop:biorth} \textup{\cite{Zhe99}} 
	The functions
	\begin{equation}
		V_n(x):=L^*P_n^*(x) \label{eq:V_LP}
	\end{equation}
	are biorthogonal partners for $P_n(x)$ with respect to the scalar product \eqref{eq:scalarprod}, that is we have
	\begin{equation}
		\sum_{x=0}^N w_x P_n(x) V_m(x) = h_n \, \delta_{nm}, \label{eq:biort2}
	\end{equation}
	where $h_n$ is a normalization constant.
\end{prop}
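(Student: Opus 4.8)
The plan is to establish the general biorthogonality result that underlies Proposition~\ref{prop:biorth} by working directly with the two GEVP's. First I would observe that $P_n(x)$ solves $\M P_n = n \L P_n$ while $P_m^*(x)$ solves the adjoint GEVP $\M^* P_m^* = m \L^* P_m^*$, and that $V_m(x) = \L^* P_m^*(x)$. The starting point is to compute, in two different ways, the ``mixed'' bilinear quantity $(\M P_n, P_m^*)_{(\al,\bt)}$. Using the eigenvalue equation for $P_n$ on one side gives $(\M P_n, P_m^*) = n(\L P_n, P_m^*)$; moving operators across the scalar product via the definition of the adjoint gives $(\M P_n, P_m^*) = (P_n, \M^* P_m^*) = m(P_n, \L^* P_m^*)$. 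Since $(\L P_n, P_m^*) = (P_n, \L^* P_m^*)$ as well, subtracting the two expressions yields $(n-m)(P_n, \L^* P_m^*)_{(\al,\bt)} = 0$. By the definition \eqref{eq:V_LP} of $V_m$, this is precisely $(n-m)\sum_{x=0}^N w_x P_n(x) V_m(x) = 0$, so the sum vanishes whenever $n \neq m$.

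Next I would address the diagonal case $n=m$ and the fact that $h_n$ is a genuine (nonzero) normalization constant rather than something that could vanish. For this one sets $h_n := \sum_{x=0}^N w_x P_n(x) V_n(x) = (P_n, \L^* P_n^*)_{(\al,\bt)} = (\L P_n, P_n^*)_{(\al,\bt)}$. To see $h_n \neq 0$, I would argue at the level of finite-dimensional linear algebra: in the basis $\{\vp_n\}$ the operators $\L$ and $\M$ are upper bidiagonal with distinct ratios $\eta_n^{(3)}/\eta_n^{(1)} = n$ on the diagonal, so the pencil $\M - \lambda\L$ is nondegenerate and the eigenvectors $P_n$ of the GEVP together with the eigenvectors $P_n^*$ of the transposed pencil form dual bases, up to normalization; equivalently the matrix with entries $(P_m, \L^* P_n^*) = (\L P_m, P_n^*)$ is diagonal with nonzero diagonal entries because $\L$ is invertible on the relevant space (its diagonal entries $\eta_n^{(1)} = -(n+\al+1)$ are nonzero for generic $\al$). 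This is the standard argument behind the cited result of \cite{Zhe99}, and I would present it in that spirit, noting explicitly the genericity assumption on $\al,\bt$ needed for nondegeneracy.

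Finally I would remark on well-definedness: one must check that $V_n(x) = \L^* P_n^*(x)$ is indeed a rational function on the grid so that the pairing \eqref{eq:biort2} makes sense, and that $P_n^*$ exists as a solution of the adjoint GEVP in the same space — this follows from transposing the matrices \eqref{eq:L_mat}--\eqref{eq:M_mat}, which shows the adjoint pencil has the same eigenvalues $n$ and one-dimensional eigenspaces, with $\L^*$ mapping into rational functions because $w_x$ appears in the adjoint only through ratios $w_{x\pm 1}/w_x$ that are rational in $x$. The explicit ${_3}F_2$ form of $V_n$ promised in the surrounding text is not needed for this proposition and would be derived separately.

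The main obstacle I expect is not the formal manipulation — the two-way computation of $(\M P_n, P_m^*)$ is short — but rather making the nondegeneracy/nonvanishing of $h_n$ airtight: one needs to know that the GEVP eigenvectors span the space and pair nondegenerately with the adjoint eigenvectors, which requires care about when $\M - \lambda\L$ is a regular pencil and when the parameters $\al,\bt$ avoid the degenerate locus (e.g.\ $\al+1 \notin \{0,-1,\dots,-N\}$ and $\bt+n \notin \{1,\dots\}$ so that the ${_3}F_2$'s are well defined). Since the statement is quoted from \cite{Zhe99}, I would keep this part brief, citing that reference for the general mechanism and only flagging the genericity conditions under which it applies here.
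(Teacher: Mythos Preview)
Your approach is correct and is precisely the standard argument behind the result cited from \cite{Zhe99}; the paper itself does not reproduce a proof of this proposition, so there is nothing to compare beyond noting that your two-way computation of $(\M P_n, P_m^*)_{(\al,\bt)}$ is exactly the mechanism the citation stands in for.

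One remark on your treatment of the diagonal case: rather than arguing abstractly that the pencil is regular and the pairing nondegenerate, the paper later (Proposition~\ref{prop:h_n}) computes $h_n$ explicitly as $h_n = n!\,(1+\al+\bt)_n / ((-N)_n(\bt)_n)$, which is manifestly nonzero under the obvious genericity conditions on $\al,\bt$. So in the context of this paper you could simply defer nonvanishing to that later computation, keeping the present argument to the off-diagonal part.
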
 
We now obtain the explicit expression for the functions $V_n(x)$.
\begin{prop}\label{prop:V_n}
	The biorthogonal partners $V_n(x)$ of the $R_I$ polynomials of Hahn type $P_n(x)$ are given by the rational functions
	\begin{equation}
		V_n(x;\al,\bt,N) = \: {_3}F_2 \left( {-n, -x, -\alpha-N \atop -N, 1+\beta-x } ;1\right). \label{eq:V_n} 
	\end{equation}
\end{prop}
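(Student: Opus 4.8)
The plan is to determine $V_n(x)$ by solving the adjoint GEVP \eqref{eq:GEVP_MLad} explicitly, exactly paralleling the proof of the first Proposition. First I would compute the adjoints $\L^*$ and $\M^*$ with respect to the scalar product \eqref{eq:scalarprod}. Since $T^\pm$ shift the grid variable, the adjoint of $aT^+$ acting with weight $w_x$ is governed by the summation-by-parts identity $\sum_x w_x f(x)\,a(x)g(x+1)=\sum_x w_x \big(\tfrac{w_{x-1}}{w_x}a(x-1)\big)f(x-1)\cdot$\dots; concretely $(aT^+)^*=\tfrac{w_{x-1}(\al,\bt)}{w_x(\al,\bt)}a(x-1)\,T^-$ and similarly for $T^-$, while $\cI^*=\cI$. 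Using the explicit weight \eqref{eq:w_x} one has $w_{x-1}/w_x = \tfrac{x\,(-\alpha-\beta-N+x-1)}{(-N+x-1)(-\beta+x-1)}$, which I would plug in to get $\L^*$ and $\M^*$ as concrete second-order (for $\M^*$) and first-order (for $\L^*$) difference operators. It is convenient to work in the basis of rational functions $\psi_k(x):=\dfrac{(-x)_k}{(1+\beta-x)_k}$ (or an equivalent one adapted to the $_3F_2$ in \eqref{eq:V_n}); I expect $\L^*$ and $\M^*$ to act bidiagonally on $\{\psi_k\}$, mirroring \eqref{eq:L_phi}--\eqref{eq:M_phi}.

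Next, writing $P_n^*(x)=\sum_k d_{n,k}\psi_k(x)$ and imposing $\M^*P_n^*=n\,\L^*P_n^*$, the bidiagonal actions collapse this to a two-term recurrence for the $d_{n,k}$, whose solution is a ratio of Pochhammer symbols — giving $P_n^*(x)$ as a $_3F_2$. Then $V_n(x)=\L^*P_n^*(x)$ is obtained by applying the (now explicit) bidiagonal operator $\L^*$ to that series; since $\L^*$ is upper bidiagonal in the $\psi_k$ basis, this is a finite contiguous-type manipulation that should resum into the single $_3F_2$ displayed in \eqref{eq:V_n}, up to an overall $n$-dependent constant which can be absorbed into the normalization $h_n$ (and which I would fix by comparing, say, the top-degree or the $x=0$ value). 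As a consistency check I would verify that the claimed $V_n$ indeed pairs orthogonally with low-degree $P_m$ via the Chu--Vandermonde / Saalschütz evaluations, and that the $N\to\infty$ scaling $x\to Nx$ reproduces the biorthogonal partner of the continuous Askey polynomials \eqref{eq:limAskpol}.

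The main obstacle I anticipate is bookkeeping rather than conceptual: getting the adjoint operators in clean closed form requires the right choice of rational basis so that $\L^*,\M^*$ are genuinely bidiagonal (a poor choice makes them tridiagonal and the recurrence no longer two-term), and then the resummation $V_n=\L^*P_n^*$ must be carried out carefully to land on precisely the parameters $\{-n,-x,-\alpha-N\}$ over $\{-N,1+\beta-x\}$ rather than a contiguous neighbor. A useful shortcut, which I would use to cross-check the direct computation, is to observe the formal symmetry between \eqref{eq:Pn} and \eqref{eq:V_n}: $V_n$ looks like $P_n$ under the substitution $\al+1\mapsto -\alpha-N$ together with a reflection relating the lower parameter $1-\beta-n$ to $1+\beta-x$, which is exactly the kind of duality the weight \eqref{eq:w_x} is built to produce; making this precise gives an independent derivation of \eqref{eq:V_n}.
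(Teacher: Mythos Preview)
Your overall strategy---compute $\L^*,\M^*$ from the weight, expand in the rational basis $\psi_k(x)=\dfrac{(-x)_k}{(1+\beta-x)_k}$, and solve a recurrence---is exactly the paper's. But your expectation that \emph{both} adjoints act bidiagonally on $\{\psi_k\}$ is wrong, and this is not a matter of basis choice. In the basis you propose (the paper's $\rho_k$), one finds
\[
\L^*\rho_k=\chi_k^{(1)}\rho_{k+1}+\chi_k^{(2)}\rho_k,\qquad
\M^*\rho_k=\chi_k^{(3)}\rho_{k+1}+\chi_k^{(4)}\rho_k+\chi_k^{(5)}\rho_{k-1},
\]
so $\L^*$ is (lower) bidiagonal but $\M^*$ is genuinely \emph{tridiagonal}. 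Consequently, writing $P_n^*=\sum_k d_{n,k}\rho_k$ and imposing $\M^*P_n^*=n\,\L^*P_n^*$ produces a \emph{three}-term recurrence for the $d_{n,k}$, not a two-term one; your pipeline ``solve two-term recurrence for $P_n^*$, then apply $\L^*$ and resum'' does not get off the ground as stated.

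The idea you are missing is a factorization: one checks that $\M^*=Z^*\L^*$ with $Z^*$ acting bidiagonally,
\[
Z^*\rho_k=k\,\rho_k-\frac{k(1+N-k)}{1+\alpha+N-k}\,\rho_{k-1}.
\]
This turns the adjoint GEVP into the \emph{ordinary} eigenvalue problem $Z^*V_n=nV_n$ for $V_n:=\L^*P_n^*$ directly, and now the expansion $V_n=\sum_k d_{n,k}\rho_k$ does give a two-term recurrence, with solution $d_{n,k}=\dfrac{(-n)_k(-\alpha-N)_k}{k!\,(-N)_k}$, i.e.\ precisely \eqref{eq:V_n}. Note that this also removes the second step you were worried about (applying $\L^*$ and resumming into a contiguous ${_3}F_2$): you never need to know $P_n^*$ itself. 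Your ``duality'' cross-check and the $N\to\infty$ limit are fine as sanity checks, but they do not substitute for this factorization.
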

\begin{proof}
	Using the scalar product \eqref{eq:scalarprod} and the weight function \eqref{eq:w_x}, one finds the following expressions for the adjoint shift operators
	\begin{align*}
		&(T^+)^* = \frac{w_{x-1}}{w_x} T^- = \frac{x(\alpha+\beta+N-x+1)}{(-N+x-1)(\beta-x+1)} T^-,\\
		&(T^-)^* = \frac{w_{x+1}}{w_x} T^+ = \frac{(-N+x)(\beta-x)}{(x+1)(\alpha+\beta+N-x)} T^+.
	\end{align*} 
	This allows to compute the adjoint operators of $\L$ and $\M$ defined in \eqref{eq:L_op} and \eqref{eq:M_op}:
	\begin{align}
		&L^* =  -\frac{x(1+\alpha+\beta+N-x)}{(1+\beta-x)} T^-+(x-N-\alpha-1)\cI, \label{eq:Ls_op}\\
		&M^* = \frac{x(\beta-x)(1+\alpha+\beta+N-x)}{(1+\beta-x)} T^- + (N-x)(x-\beta)T^+ \label{eq:Ms_op}\\
		& \qquad + \{2x^2-(2N+2\beta+\alpha)x+(\beta-1)N\} \cI. \nonumber
	\end{align}
	A basis for the rational functions on the linear grid is 
	\begin{equation}
		\rho_n(x)= \frac{(-x)_n}{(1+\beta-x)_n}. \label{eq:basis_rho}
	\end{equation}
	The adjoint operators $\L^*$ and $\M^*$ given in \eqref{eq:Ls_op} and \eqref{eq:Ms_op} act on this basis as
	\begin{align}
		&L^* \rho_n(x) = \chi_n^{(1)} \rho_{n+1}(x) + \chi_n^{(2)}\rho_n(x), \label{eq:Ls_rho}\\
		&M^* \rho_n(x) = \chi_n^{(3)} \rho_{n+1}(x) +\chi_n^{(4)} \rho_n(x) + \chi_n^{(5)} \rho_{n-1}(x), \label{eq:Ms_rho}
	\end{align}
	where
	\begin{alignat}{3}
		&\chi_n^{(1)} = \alpha+N-n, \quad &&\chi_n^{(2)} = -(1+\alpha+N-n), && \\
		&\chi_n^{(3)} = (\alpha+N-n)(1+n), \quad &&\chi_n^{(4)} = -N(2n+1)+n(2n-\alpha), \quad &&\chi_n^{(5)} = n(1+N-n).
	\end{alignat}

	One can verify that the operator $\M^*$ factorizes as
	\begin{equation}
		M^*=Z^*L^*,
	\end{equation}
	where $Z^*$ is an operator that acts in a bidiagonal fashion on the basis \eqref{eq:basis_rho}: 
	\begin{align}
		Z^* \rho_n(x) = n\rho_n(x)-\frac{n(1+N-n)}{(1+\alpha+N-n)}\rho_{n-1}(x). \label{eq:Zs_rho}
	\end{align}
	Therefore, the adjoint GEVP \eqref{eq:GEVP_MLad} becomes the following ordinary eigenvalue problem
	\begin{equation}
		Z^*V_n(x) = n V_n(x), \label{eq:EVPV}
	\end{equation}
	where we have used the definition \eqref{eq:V_LP}.
	
	To determine the explicit expression of the functions $V_n(x)$, we write
	\begin{equation}
		V_n(x) = \sum_{k=0}^N d_{n,k} \rho_k(x), \label{eq:decompV}
	\end{equation}
	for some coefficients $d_{n,k}$.
	Substituting \eqref{eq:decompV} in \eqref{eq:EVPV} and then using the action \eqref{eq:Zs_rho}, one gets the two-term recurrence relation
	\begin{equation}
		(k-n) d_{n,k} -\frac{(k+1)(k-N)}{(k-\alpha-N)} d_{n,k+1}=0 \quad \text{for } k=0,1,\dots,N
	\end{equation}
	with $d_{n,N+1}:=0$. The solution to this recurrence relation is given by
	\begin{equation}
		d_{n,k} = \frac{(-n)_k(-\alpha-N)_k}{k!(-N)_k} d_{n,0}, \quad \text{for } k=1,\dots,N. \label{eq:coeffV}
	\end{equation}
	We chose again for convenience $d_{n,0}=1$.
	Combining \eqref{eq:coeffV}, \eqref{eq:decompV} and \eqref{eq:basis_rho}, one obtains the result \eqref{eq:V_n}.
\end{proof}
The normalization factor $h_n$ will be obtained later in Section \ref{sec:HahnOP}, see Proposition \ref{prop:h_n}.

\section{Bispectrality}\label{sec:bispect}
In this section, we obtain the difference and recurrence relations of the $R_I$ polynomials of Hahn type.  

The triplet of operators $\L,\M,\Y$ act on the polynomials $P_n(x)$ by shifting their parameters as follows 
\begin{align}
	&\L P_n(x;\alpha,\beta,N) = -\frac{(\alpha+1)(1-\beta)}{(1-\beta-n)} P_n(x;\alpha+1,\beta-1,N), \label{eq:L_Pn_shift} \\
	&\M P_n(x;\alpha,\beta,N) = -n\frac{(\alpha+1)(1-\beta)}{(1-\beta-n)} P_n(x;\alpha+1,\beta-1,N), \label{eq:M_Pn_shift} \\
	&\Y P_n(x;\alpha,\beta,N) = -x\frac{(\alpha+1)(1-\beta)}{(1-\beta-n)} P_n(x;\alpha+1,\beta-1,N). \label{eq:Y_Pn_shift}
\end{align}
The previous equations can indeed be derived directly by using the definitions \eqref{eq:L_op}--\eqref{eq:Y_op} of the operators and the explicit expression \eqref{eq:Pn} of the $R_I$ Hahn polynomials. Note that equations \eqref{eq:L_Pn_shift} and \eqref{eq:M_Pn_shift} are consistent with the GEVP \eqref{eq:GEVP_ML}, while equation \eqref{eq:Y_Pn_shift} simply reflects the equality observed in \eqref{eq:YxL}.   

\subsection{Recurrence relation}

One can verify (using contiguity relations for ${_3}F_2$ functions for instance) that the polynomials with shifted parameters that appear in \eqref{eq:L_Pn_shift}--\eqref{eq:Y_Pn_shift} satisfy
\begin{equation}
	\frac{(\alpha+1)(1-\beta)}{(1-\beta-n)} P_n(x;\alpha+1,\beta-1,N) =
	(n+\alpha+1)P_n(x;\alpha,\beta,N) + 
	\frac{n(\alpha+\beta+n)}{(1-\beta-n)}P_{n-1}(x;\alpha,\beta,N). \label{eq:Pn_shift}
\end{equation}
It is apparent from \eqref{eq:L_Pn_shift} and \eqref{eq:Pn_shift} that the action of the operator $\L$ on the basis of polynomials $P_n(x)=P_n(x;\alpha,\beta,N)$ for $n=0,1,\dots,N$ is
\begin{align}
	L P_n(x) = - (n+\alpha+1)P_n(x) -\frac{n(\alpha+\beta+n)}{(1-\beta-n)} P_{n-1}(x). \label{eq:L_Pn}
\end{align}
A straightforward computation performed using the expansion \eqref{eq:exp_Pvp} of the polynomials $P_n(x)$ in the Pochhammer basis $\vp_n(x)$ and the result \eqref{eq:Pn_shift} also shows that the action of the operator $\Y$ on the basis of polynomials $P_n(x)$ is
\begin{align}
	&Y P_n(x) = \cA_n P_{n+1}(x) - \left(  \cA_n+\cB_n \right) P_n(x) +\cB_n P_{n-1}(x), \label{eq:Y_Pn}
\end{align}
where
\begin{align}
	\cA_n = (n-N)(\beta+n), \quad \cB_n=n(\alpha+\beta+n).
\end{align}
Applying both sides of equality \eqref{eq:YxL} $\Y = x \L$ on $P_n(x)$ and using the tridiagonal actions \eqref{eq:L_Pn} and \eqref{eq:Y_Pn} of the operators $\L$ and $\Y$ to re-express each side, one gets the three-term recurrence relation for the polynomials $P_n(x)$:
\begin{align}
	&\cA_n P_{n+1}(x) -(\cA_n+\cB_n)P_n(x) + \cB_n P_{n-1}(x) \nonumber \\
	 &= -x\left((n+\alpha+1) P_n(x) + \frac{n(\alpha+\beta+n)}{(1-\beta-n)}P_{n-1}(x)\right).
\end{align}

\begin{rem}
	It is standard to renormalize the polynomials $P_n(x)$ to make them monic (\textit{i.e.}\ with leading term $x^n$):
	\begin{equation}
		p_n(x):=\mu_n P_n(x) \quad \text{with} \quad \mu_n=\frac{(-N)_n(1-\bt-n)_n}{(\al+1)_n}.
	\end{equation}
	The normalized recurrence relation then reads
	\begin{align}
		p_{n+1}(x) +\left(\gamma_n -x \right)p_n(x) + \delta_n (x-\epsilon_n) p_{n-1}(x) =0, \label{eq:recmonic}
	\end{align}
	where we have defined
	\begin{equation}
		\gamma_n = \frac{2 n^2 + (\al+2\bt-N)n -\bt N }{n+\al+1}, \quad \delta_n = \frac{n(\al+\bt+n)(N+1-n)}{(\al+n)(\al+1+n)}, \quad \epsilon_n =n+\bt-1. \label{eq:recconstmonic}
	\end{equation}
	The recurrence relation \eqref{eq:recmonic} with a linear term $x-\epsilon_n$ in front of $p_{n-1}$ is said to be of $R_I$ type \cite{IM}.
\end{rem}

\begin{rem}
	Let us denote
	\begin{equation}
		\Tilde{p}_n(x) := (-1)^n\frac{(1-\bt-n)_n}{(\al+1)_n} \Tilde{P}_n(x;\al,\bt),
	\end{equation}
	where we recall that $\Tilde{P}_n(x;\al,\bt)$ are the continuous Askey polynomials recovered in \eqref{eq:limAskpol} from the $R_I$ polynomials $P_n(x;\al,\bt,N)$ by replacing $x\to Nx$ and taking the limit $N \to \infty$. Performing the same replacement and limit in the normalized recurrence relation \eqref{eq:recmonic}, one finds
	\begin{equation}
		\Tilde{p}_{n+1}(x) - \left( \frac{n+\bt}{n+\al+1} + x \right) \Tilde{p}_n(x) + x \frac{n(n+\al+\bt)}{(n+\al)(n+\al+1)} \Tilde{p}_{n-1}(x) =0. \label{eq:recAsk}
	\end{equation}
	Equation \eqref{eq:recAsk} corresponds to the recurrence relation of the Askey polynomials obtained in \cite{VZ_Ask,HR}.
\end{rem}

\subsection{Difference equation} 
As seen and used already, the GEVP \eqref{eq:GEVP_ML} amounts to a difference equation when $\L$ and $\M$ are the shift operators introduced from the beginning in \eqref{eq:L_op} and \eqref{eq:M_op}. It reads
\begin{equation}
	A(x)P_n(x+1) + B(x)P_n(x-1) + C(x) P_n(x)  = n\left((N-x)P_n(x+1) +(x-N-\al-1)P_n(x)\right),
\end{equation} 
where the coefficients $A(x),B(x),C(x)$ are given in \eqref{eq:ABC}.

\section{Connection with Hahn orthogonal polynomials}\label{sec:HahnOP}
In this section, we identify a connection between the $R_I$ Hahn biorthogonal polynomials and the Hahn orthogonal polynomials. We then use this connection in order to obtain the normalization constant $h_n$ of the biorthogonality relation \eqref{eq:biort2}. 

The Hahn polynomials form a family of orthogonal polynomials defined on the linear grid $x=0,1,\dots,N$, for $N$ a non-negative integer, see \cite{Koek}. They are given in terms of hypergeometric series by 
\begin{equation}
	H_n(x;\xi,\eta,N) =  {_3}F_2 \left(  { -n, -x, n+\xi+ \eta +1 \atop -N, \xi+1 }; 1\right),
	\label{eq:Hahn_n}
\end{equation}
where $\xi$ and $\eta$ are two real parameters and $n=0,1,\dots,N$. They satisfy the orthogonality relation
\begin{equation}
	\sum_{x=0}^N w_x^{(H)} H_n(x) H_m(x) = h_n^{(H)} \, \delta_{nm} \label{eq:orthHahn}
\end{equation}
with weight function 
\begin{equation}
	w_x^{(H)}(\xi,\eta,N)= \frac{(1+\eta)_N}{N!}\frac{(-N)_x (1+\xi)_x}{x! (-\eta-N)_x} \label{eq:wH_x}
\end{equation}
and normalization constant
\begin{equation}
	h_n^{(H)}(\xi,\eta,N) = \frac{(-1)^n(n+\xi+\eta+1)_{N+1}(\eta+1)_n \ n!}{(2n+\xi+\eta+1)(\xi+1)_n(-N)_n \ N!}. \label{eq:norm_Hahn}
\end{equation}

Let us now take
\begin{equation}
	\xi=-\beta-n, \qquad \eta = \alpha + \beta, \label{eq:param}
\end{equation}
where $\alpha,\beta$ are real parameters\footnote{The first reparametrization was referred to as gluing in \cite{KS} and used to obtain $R_I$-polynomials without obtaining the biorthogonality relation nor their bispectral properties.}. Applying this choice to expression \eqref{eq:Hahn_n}, one recovers the $R_I$ Hahn polynomials given in \eqref{eq:Pn}, that is
\begin{equation}
	H_n(x;-\beta-n,\alpha + \beta,N) = P_n(x;\alpha,\beta,N). \label{eq:Hparam}
\end{equation} 
Substituting the choice of parameters \eqref{eq:param} in the weight function \eqref{eq:wH_x} and using the identity 
\begin{equation}
	(a-m)_k= \frac{(1-a)_m(a)_k}{(1-a-k)_m},
\end{equation}
one finds an expression which now depends on $n$ and which involves the weight function of the $R_I$ Hahn polynomials:
\begin{align}
	w_x^{(H)}(-\beta-n,\alpha + \beta,N) 
	&=  \frac{(\alpha+1)_N(1+\beta)_{n-1}}{N! \ (1+\beta-x)_{n-1}}w_x(\alpha,\beta), \label{eq:wHparam}
\end{align}
where we have used the equality
\begin{equation}
	\frac{(-\alpha-\beta-N)_N}{(-\alpha-N)_N} = \frac{(\alpha+\beta+1)_N}{(\alpha+1)_N}. 
\end{equation}
The orthogonality relation \eqref{eq:orthHahn} implies that for any polynomial $\pi_m(x)$ in $x$ of degree $m$ defined on the linear grid we have
\begin{equation}
	\sum_{x=0}^N w_x^{(H)} H_n(x)\pi_m(x) = 0 \quad \text{for } 0\leq m<n.  \label{eq:orthHahn2}
\end{equation}
It is true in particular for the polynomial
\begin{equation}
	\pi_m(x) = \frac{(1+\beta-x)_{n-1}}{(1+\beta-x)_{n-1-m}} = (\epsilon_n-x)(\epsilon_{n-1}-x)\dots(\epsilon_{n-m+1}-x),
\end{equation}
where $\epsilon_n$ is the same constant as the one defined in \eqref{eq:recconstmonic}. 
Therefore, by taking the parameters $\xi$ and $\eta$ as in \eqref{eq:param} in equation \eqref{eq:orthHahn2} and using the results \eqref{eq:Hparam} and \eqref{eq:wHparam}, one gets
\begin{equation}
	\sum_{x=0}^N w_x(\alpha,\beta)\frac{1}{(1+\beta-x)_{m}} P_n(x;\alpha,\beta,N) = 0 \quad \text{for } 0 \leq m < n. \label{eq:biorth3}
\end{equation} 
The previous equation implies the biorthogonality relation \eqref{eq:biort2} for $m<n$. Indeed, one can express the basis functions $\rho_n(x)$ defined in \eqref{eq:basis_rho} as a linear combination of the rational monomials
\begin{equation}
	\frac{1}{(1+\bt-x)_k}. \label{eq:ratmonom}
\end{equation}
More precisely 
\begin{equation}
	\rho_n(x)=\frac{(-x)_n}{(1+\beta-x)_n} = \sum_{k=0}^n \frac{u_k}{(1+\beta-x)_n}, \label{eq:rhoratmonom}
\end{equation}
where $u_k$ are coefficients that can be determined. For instance, one finds
\begin{equation}
	u_n = (-\beta-n)_n. \label{eq:u_n}
\end{equation}
It follows that the rational functions $V_m(x)$ obtained in \eqref{eq:V_n} can be expanded on the basis \eqref{eq:ratmonom} with highest degree in the denominator $k=m$. For $m<n$ all the terms in this expansion are orthogonal to $P_n(x)$ because of \eqref{eq:biorth3}, therefore implying the biorthogonality relation \eqref{eq:biort2} for $m<n$.

The connection with the Hahn orthogonal polynomials allows to prove the following result.
\begin{prop}\label{prop:h_n}
	The normalization factor in the biorthogonality relation \eqref{eq:biort2} is given by
	\begin{equation}
		h_n = \frac{n!(1+\al + \beta)_n}{(-N)_n(\beta)_n}. \label{eq:h_n}
	\end{equation}
\end{prop}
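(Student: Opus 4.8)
The plan is to collapse $h_n=\sum_{x=0}^{N}w_x(\al,\bt)P_n(x)V_n(x)$ to a single explicitly summable quantity, by combining the expansion of $V_n$ from Section~\ref{sec:biorth} with the orthogonality relation \eqref{eq:biorth3} and the Hahn dictionary \eqref{eq:Hparam} and \eqref{eq:wHparam}, and then to evaluate that quantity with a Chu--Vandermonde summation followed by Saalschütz's theorem.

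First I would expand $V_n$ over the rational monomials $1/(1+\bt-x)_j$. Feeding \eqref{eq:rhoratmonom} into \eqref{eq:decompV}--\eqref{eq:coeffV} writes $V_n(x)=\sum_{j=0}^{n}v_{n,j}\,(1+\bt-x)_j^{-1}$, and only the leading coefficient is needed:
\begin{equation*}
 v_{n,n}=d_{n,n}\,u_n=\frac{(-n)_n(-\al-N)_n}{n!\,(-N)_n}\,(-\bt-n)_n=\frac{(-1)^n(-\al-N)_n(-\bt-n)_n}{(-N)_n},
\end{equation*}
with $d_{n,n}$ read off from \eqref{eq:coeffV} and $u_n$ from \eqref{eq:u_n}. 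Because \eqref{eq:biorth3} gives $\sum_{x=0}^{N}w_x(\al,\bt)P_n(x)/(1+\bt-x)_m=0$ for $0\le m<n$, pairing $V_n$ against $w_x P_n$ annihilates every term but $j=n$, so that
\begin{equation*}
 h_n=\frac{(-1)^n(-\al-N)_n(-\bt-n)_n}{(-N)_n}\,S_n,\qquad S_n:=\sum_{x=0}^{N}w_x(\al,\bt)\,\frac{P_n(x)}{(1+\bt-x)_n}.
\end{equation*}

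Next I would compute $S_n$ using the reparametrization \eqref{eq:param}, $\xi=-\bt-n$, $\eta=\al+\bt$. By \eqref{eq:Hparam} and \eqref{eq:wHparam}, the summand of $S_n$ is $\frac{N!}{(\al+1)_N(1+\bt)_{n-1}}\,w_x^{(H)}(\xi,\eta,N)\,H_n(x;\xi,\eta,N)/(n+\bt-x)$ with $n+\bt-x=-(\xi+x)$, and the elementary shift identity $w_x^{(H)}(\xi,\eta,N)/(\xi+x)=\xi^{-1}w_x^{(H)}(\xi-1,\eta,N)$ (immediate from \eqref{eq:wH_x} and $(1+\xi)_x/(\xi+x)=(\xi)_x/\xi$, read as $1/\xi$ when $x=0$) reduces $S_n$ to a multiple of $\sum_{x=0}^{N}w_x^{(H)}(\xi-1,\eta,N)H_n(x;\xi,\eta,N)$. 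For this last sum I would plug in the series \eqref{eq:Hahn_n}, exchange the two summations, carry out the inner sum over $x$ by the Chu--Vandermonde formula, and note that the remaining terminating ${}_3F_2$ at unit argument is balanced, so Saalschütz's theorem evaluates it. Putting $\xi=-\bt-n$, $\eta=\al+\bt$ back and using $(1+\bt)_{n-1}(\bt+n)=(1+\bt)_n$ and $(\al+1-n)_N/(\al+1-n)_n=(\al+1)_{N-n}$ yields $S_n=(-1)^n n!\,(1+\al+\bt)_n/\big[(\al+N-n+1)_n(1+\bt)_n(\bt)_n\big]$. Substituting into the displayed formula for $h_n$ and simplifying with $(-\al-N)_n=(-1)^n(\al+N-n+1)_n$ and $(-\bt-n)_n=(-1)^n(1+\bt)_n$ gives exactly \eqref{eq:h_n}.

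The main obstacle is just the Pochhammer bookkeeping in this final computation: confirming that the ${}_3F_2$ coming out of Chu--Vandermonde is indeed Saalschützian (so the summation theorem applies), and keeping the endpoint convention $(1+\xi)_{x-1}=1/\xi$ at $x=0$ straight. Everything after the reduction to $S_n$ is then routine but a little tedious; an alternative to the Saalschütz step would be to first establish $\sum_{x=0}^{N}w_x(\al,\bt)P_n(x)^2/(1+\bt-x)_{n-1}=\frac{N!}{(\al+1)_N(1+\bt)_{n-1}}h_n^{(H)}(-\bt-n,\al+\bt,N)$ from \eqref{eq:orthHahn} and \eqref{eq:wHparam} and then relate it to $h_n$ through the $R_I$ recurrence \eqref{eq:recmonic}, but the direct route above seems shorter.
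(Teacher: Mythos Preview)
Your reduction to the single quantity $S_n=\sum_x w_x(\al,\bt)\,P_n(x)/(1+\bt-x)_n$ by isolating the leading coefficient $v_{n,n}$ of $V_n$ in the rational-monomial basis is exactly what the paper does (compare \eqref{eq:renorm4}--\eqref{eq:renorm5}), and your weight-shift identity $w_x^{(H)}(\xi,\eta,N)/(\xi+x)=\xi^{-1}w_x^{(H)}(\xi-1,\eta,N)$ correctly brings $S_n$ to a multiple of $\sum_x w_x^{(H)}(\xi-1,\eta,N)\,H_n(x;\xi,\eta,N)$.

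The genuine difference is in how that last sum is evaluated. The paper obtains \eqref{eq:renorm2} via the Christoffel transform of the Hahn family at the shift $\xi\to\xi-1$ together with the Christoffel--Darboux formula, which collapses the sum to $Y_{n0}\,h_0^{(H)}(\xi-1)$ and hence to the ratio $\kappa_n(\xi-1)h_n^{(H)}(\xi-1)/\big[\kappa_n(\xi)\,H_n(-\xi;\xi-1,\eta,N)\big]$; only a single Chu--Vandermonde is then needed to evaluate $H_n(-\xi;\xi-1,\eta,N)$. Your route instead expands $H_n$ termwise, sums over $x$ by Chu--Vandermonde, and lands on the terminating balanced series
\[
{}_3F_2\!\left({-n,\ n+\xi+\eta+1,\ \xi \atop \xi+1,\ \xi+\eta+1};1\right),
\]
which is indeed Saalsch\"utzian (the lower parameters exceed the upper by $1$), so the Pfaff--Saalsch\"utz sum applies and gives the same closed form. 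Your approach is more elementary and self-contained---no orthogonal-polynomial machinery is invoked---while the paper's Christoffel argument makes transparent \emph{why} the sum closes: shifting $\xi$ by one is precisely a Christoffel transform, so only the $k=0$ term survives against the shifted weight. Either way the bookkeeping with $(-\al-N)_n=(-1)^n(\al+N-n+1)_n$ and $(-\bt-n)_n=(-1)^n(1+\bt)_n$ then yields \eqref{eq:h_n}.
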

\begin{proof}
	Let us define the factor
	\begin{equation}
		\kappa_n = \frac{(-N)_n(\xi+1)_n}{(n+\xi+\eta+1)_n} \label{eq:kap_n}
	\end{equation}
	which ensures a monic normalization for the Hahn polynomials \eqref{eq:Hahn_n}. By Christoffel transform, 
	\begin{equation}
		H_n(x;\xi,\eta,N) = \frac{\kappa_{n+1}(\xi-1)}{\kappa_n(\xi)} \frac{H_{n+1}(x;\xi-1,\eta,N)-W_n H_{n}(x;\xi-1,\eta,N)}{x+\xi}, \label{eq:Chr}
	\end{equation}
	where we have explicitly indicated the dependance of $\kappa_n$ on the parameter $\xi$ and where
	\begin{equation}
		W_n = \frac{H_{n+1}(-\xi;\xi-1,\eta,N)}{H_{n}(-\xi;\xi-1,\eta,N)}.
	\end{equation}
	One can also use the Christoffel-Darboux formula to write 
	\begin{equation}
		\frac{H_{n+1}(x;\xi-1,\eta,N)-W_n H_{n}(x;\xi-1,\eta,N)}{x+\xi} = \frac{\kappa_{n}(\xi-1)}{\kappa_{n+1}(\xi-1)} \sum_{k=0}^{n} Y_{nk} H_{k}(x;\xi-1,\eta,N), \label{eq:ChrDar}
	\end{equation}
	where
	\begin{equation}
		Y_{nk} = \frac{h_n^{(H)}(\xi-1) H_{k}(-\xi;\xi-1,\eta,N)}{h_k^{(H)}(\xi-1)H_{n}(-\xi;\xi-1,\eta,N)}. \label{eq:Y_nk}
	\end{equation}
	Combining \eqref{eq:Chr} and \eqref{eq:ChrDar}, one thus finds
	\begin{equation}
		H_n(x;\xi,\eta,N) = \frac{\kappa_{n}(\xi-1)}{\kappa_n(\xi)} \sum_{k=0}^{n} Y_{nk} H_{k}(x;\xi-1,\eta,N).
	\end{equation}
	It directly follows that
	\begin{equation}
		\sum_{x=0}^N w_x^{(H)}(\xi-1) H_n(x;\xi,\eta,N) = \frac{\kappa_{n}(\xi-1)}{\kappa_n(\xi)} \sum_{k=0}^{n} Y_{nk}  \sum_{x=0}^N w_x^{(H)}(\xi-1) H_{k}(x;\xi-1,\eta,N). \label{eq:renorm1}
	\end{equation}
	The case $m=0$ in the orthogonality relation \eqref{eq:orthHahn} of the Hahn polynomials can be used to simplify the RHS of \eqref{eq:renorm1}. As a result,
	\begin{equation}
		\sum_{x=0}^N w_x^{(H)}(\xi-1) H_n(x;\xi,\eta,N) = \frac{\kappa_{n}(\xi-1)}{\kappa_n(\xi)} Y_{n0}  h_0^{(H)}(\xi-1).
	\end{equation}
	Using the explicit expression for $Y_{nk}$ given in \eqref{eq:Y_nk}, one finally gets
	\begin{equation}
		\sum_{x=0}^N w_x^{(H)}(\xi-1) H_n(x;\xi,\eta,N) = \frac{\kappa_{n}(\xi-1)}{\kappa_n(\xi)} \frac{h_n^{(H)}(\xi-1)}{H_{n}(-\xi;\xi-1,\eta,N)}. \label{eq:renorm2}
	\end{equation}
	With the choice of parameters \eqref{eq:param}, the LHS of \eqref{eq:renorm2} becomes 
	\begin{equation}
		\sum_{x=0}^N w_x^{(H)}(\xi-1) H_n(x;\xi,\eta,N) = \frac{(1+\alpha)_N(1+\beta)_{n}}{N!} \sum_{x=0}^N w_x(\alpha,\beta)\frac{1}{(1+\beta-x)_{n}} P_n(x;\alpha,\beta,N).
	\end{equation}
	%
	Under the same choice of parameters \eqref{eq:param}, it is found that
	\begin{equation}
		H_{n}(-\xi;\xi-1,\eta,N) = {_2}F_1 \left(  { -n, \alpha \atop -N }; 1\right) = \frac{(-N-\alpha)_n}{(-N)_n}, \label{eq:renorm3}
	\end{equation}
	where we have used the Chu--Vandermonde formula. The RHS of \eqref{eq:renorm2} with parameters \eqref{eq:param} can hence be computed explicitly using \eqref{eq:norm_Hahn}, \eqref{eq:kap_n} and \eqref{eq:renorm3}. After some simplifications, equation \eqref{eq:renorm2} becomes
	\begin{equation}
		\sum_{x=0}^N w_x(\alpha,\beta)\frac{1}{(1+\beta-x)_{n}} P_n(x;\alpha,\beta,N) = \frac{(-1)^n(\alpha+\beta+1)_n \ n!}{(\beta)_{n}(-N-\alpha)_n(-\beta-n)_n}. \label{eq:renorm4}
	\end{equation}	
	Using the explicit expression \eqref{eq:V_n} for $V_n(x)$, the properties \eqref{eq:rhoratmonom} and \eqref{eq:u_n} of the rational basis functions $\rho_n(x)$ as well as the biorthogonality relation \eqref{eq:biorth3}, it follows that
	\begin{align}
		&\sum_{x=0}^N w_x(\al,\bt)V_n(x;\al,\bt,N) P_n(x;\al,\bt,N) \nonumber \\ 
		&=\frac{(-n)_n(-\alpha-N)_n}{(-N)_n \ n!} \sum_{x=0}^N w_x(\alpha,\beta)\frac{(-\beta-n)_n}{(1+\beta-x)_{n}} P_n(x;\alpha,\beta,N). \label{eq:renorm5}
	\end{align}
	It is then straightforward to use \eqref{eq:renorm4} in the RHS of \eqref{eq:renorm5} to obtain
	\begin{align}
		\sum_{x=0}^N w_x(\al,\bt)V_n(x;\al,\bt,N) P_n(x;\al,\bt,N) 
		&= \frac{n!(\alpha+\beta+1)_n }{(-N)_n(\beta)_{n}}. \label{eq:renorm6}
	\end{align}
	Comparing \eqref{eq:renorm6} with \eqref{eq:biort2}, the result \eqref{eq:h_n} follows.
\end{proof}

\section{Biorthogonality relation as a limit}\label{sec:biorthlim}
A biorthogonality relation involving rational $_{10} \phi_9$ basic hypergeometric functions which are related to Wilson's system of biorthogonal functions \cite{Wil} has been obtained in \cite{GM}. In the present section, we establish a connecton between these $_{10} \phi_9$ functions and the $R_I$ polynomials of Hahn type. Note that we use the standard definition for the basic hypergeometric functions $_{r} \phi_s$, see \cite{GR}. We also use the following notations for the $q$-Pochhammer symbols:
\begin{align}
	&(a;q)_0:=1, \quad (a;q)_n:=\prod_{i=0}^{n-1}(1-aq^i), \quad n=1,2,\dots\\
	&(a_1,a_2,\dots,a_k;q)_n:=\prod_{i=1}^k(a_i;q)_n \quad n=0,1,2,\dots
\end{align}     
  
The general biorthogonality relation provided in Theorem 4.1 in \cite{GM} admits limiting cases. Of particular interest for our purposes is the case given in Corollary 4.2 in \cite{GM} involving $_{4} \phi_3$ functions. With different notations, the result is that the functions
\begin{align}
	&\widetilde{U}_n(x) = \frac{(q^{a+1},q^{a+1-d}/e;q)_n}{(q^{a+1-d},q^{a+1}/e;q)_n} {_4}\phi_3 \left( {q^{-n},q^{-x}, q^d,e \atop q^{-N}, q^{N-x+a+1},q^{d-n-a}e} ; q,q \right),\\
	&\widetilde{V}_m(x) = \frac{(q^{a+N},q^{a-N+2-d}/e;q)_m}{(q^{a+1-d},q^{a+1}/e;q)_m} {_4}\phi_3 \left( {q^{-m},q^{-x}, q^{-N+1-d},q^{-N+1}/e \atop q^{-N}, q^{-N-m+1-a}, q^{-x+a+2-b-d}/e} ; q,q \right),
\end{align}
satisfy the biorthogonality relation
\begin{equation}
	\sum_{x=0}^N \wt{w}_x \wt{U}_n(x) \widetilde{V}_m(x) = \wt{h}_n \delta_{nm}, \label{eq:biortWilson}
\end{equation}
where the weight function and normalization constant are given by
\begin{align}
	&\wt{w}_x = q^{N-x} \frac{(q^{a+1-d},q^{-N},q^{a+1}/e;q)_{N-x}}{(q^{a+1},q,q^{a+2-N-d}/e;q)_{N-x}}\frac{(q^{a+1},q^{d-a-1}e;q)_N}{(q^d,e;q)_N},\\
	&\wt{h}_n = q^{-n} \frac{(q,q^{a+1},q^{a+2-N-d}/e;q)_{n}}{(q^{-N},q^{a+1-d},q^{a+1}/e;q)_{n}}.
\end{align}
(For convenience, we have modified the parameters and variables of \cite{GM} as follows: $a\to q^a, d\to q^d, k\to N-x$.) The biorthogonality relation of the $R_I$ polynomials of Hahn type can be recovered after taking two limits as we explain next. 

Let us start by taking the limit $e \to \infty$. One finds 
\begin{align}
	&\lim_{e\to\infty}\widetilde{U}_n(x) = \frac{(q^{a+1};q)_n}{(q^{a+1-d};q)_n} {_3}\phi_2 \left( {q^{-n},q^{-x}, q^d \atop q^{-N}, q^{N-x+a+1}} ; q, q^{1+n+a-d} \right),\\
	&\lim_{e\to\infty}\widetilde{V}_m(x) = \frac{(q^{a+N};q)_m}{(q^{a+1-d};q)_m} {_3}\phi_2 \left( {q^{-m},q^{-x}, q^{-N+1-d} \atop q^{-N}, q^{-N-m+1-a}} ; q,q \right), \\
	&\lim_{e\to\infty} \wt{w}_x = \frac{(q^{a+1-d},q^{-N};q)_{N}}{(q^{d},q;q)_{N}}\frac{(q^{-N-a},q^{-N};q)_x}{(q^{-N-a+d},q;q)_x}q^{N(d-a)+x(N+d)}, \label{eq:wWils2}\\
	&\lim_{e\to\infty} \wt{h}_n = q^{-n} \frac{(q,q^{a+1};q)_{n}}{(q^{-N},q^{a+1-d};q)_{n}}.
\end{align}
The following identity \cite{GR} has been used in \eqref{eq:wWils2}: 
\begin{equation}
	(z;q)_{n-k} = \frac{(z;q)_n}{(q^{1-n}/z;q)_k}\left(-\frac{q}{z}\right)^k q^{\binom{k}{2}-nk}.
\end{equation}
Then, we take the limit $q\to 1$ of the previous results:
\begin{align}
	&\lim_{q\to 1} \lim_{e\to\infty} \widetilde{U}_n(x) = \frac{(a+1)_n}{(a+1-d)_n} {_3}F_2 \left( {-n,-x, d \atop -N, N-x+a+1} ; 1 \right), \label{eq:UWils3}\\
	&\lim_{q\to 1} \lim_{e\to\infty} \widetilde{V}_m(x) = \frac{(a+N)_m}{(a+1-d)_m} {_3}F_2 \left( {-m,-x, -N+1-d \atop -N, -N-m+1-a} ; 1 \right), \label{eq:VWils3} \\
	&\lim_{q\to 1} \lim_{e\to\infty} \wt{w}_x = \frac{(a+1-d)_N(-N)_{N}}{N!(d)_N}\frac{(-N-a)_x(-N)_x}{x!(-N-a+d)_x},\\
	&\lim_{q\to 1} \lim_{e\to\infty} \wt{h}_n =  \frac{n!(a+1)_{n}}{(-N)_n(a+1-d)_{n}}.
\end{align}
It is apparent now that with the choice of parameters 
\begin{equation}
	a=\beta -N, \qquad d=-\alpha-N, \label{eq:paramWils}
\end{equation}
the expressions \eqref{eq:UWils3} and \eqref{eq:VWils3} are respectively equal up to a normalization factor to $V_n(x)$ and $P_m(x)$, which are given in \eqref{eq:V_n} and \eqref{eq:Pn}. It is then straightforward to verify using the previous results that in the limit where $e \to \infty$ and $q \to 1$, and with parameters as in \eqref{eq:paramWils}, the biorthogonality relation \eqref{eq:biort2} with normalization constant \eqref{eq:h_n} is recovered from \eqref{eq:biortWilson}.

Note that this connection between the $_{10} \phi_9$ rational biorthogonal functions of \cite{GM} and the $R_I$ polynomials of Hahn type provides an alternative method for proving the expression \eqref{eq:h_n} of the normalization constant $h_n$.  

\section{Conclusion}\label{sec:conc}

Summing up, we have obtained the $R_I$ polynomials of Hahn type $P_n(x)$ as solutions of a GEVP involving two difference operators $\M$ and $\L$. We have then found the rational biorthogonal partners $V_n(x)$ of these polynomials by solving the adjoint GEVP. The complete biorthogonality relation was obtained using the connections with the Hahn orthogonal polynomials and was also recovered as a limit of a biorthogonality relation implying ${}_4\phi_3$ basic hypergeometric functions. We have shown that the biorthogonal polynomials $P_n(x)$ obey a three-term recurrence relation of $R_I$ type, by examining the action of $\L$ as well as that of a third difference operator $\Y$ on the basis of polynomials $P_n(x)$. The action of the pair $\M$ and $\L$ had led to a difference relation. Therefore, the triplet of operators $\L,\M,\Y$ offers a description of the bispectrality of the $R_I$ biorthogonal polynomials of Hahn type.      

This study is raising interesting questions that would deserve exploration. One of them has to do with the characterization of the algebra formed by the triplet of bispectral operators and with the determination of its representation theory. In the case of orthogonal polynomials, the polynomials appear as overlap coefficients between the eigenbases of the two bispectral operators. Similar story occurs for the biorthogonal functions with bispectral properties, see \cite{VZ_Hahn} for the Hahn rational functions and \cite{VZ_Ask} for the Askey polynomials on the unit circle. In the first case (resp. second case), a more primitive algebra called meta-Hahn algebra (resp. meta-Jacobi algebra) is shown to encompass both the bispectral properties of the Hahn (resp. Askey) biorthogonal functions as well as the Hahn (resp. Jacobi) orthogonal polynomials. A meta-$q$-Hahn algebra was also defined in \cite{BGVZ} for the rational functions of $q$-Hahn type. It would hence be of significant interest to find a meta algebra that would describe the $R_I$ polynomials of Hahn type. Another question that should be examined in more details concerns the connections with the Askey polynomials which are biorthogonal on the unit circle \cite{VZ_Ask}. Indeed, it was observed in Section \ref{sec:GEVP} that the Hahn $R_I$ biorthogonal polynomials correspond to a discrete version of the Askey biorthogonal polynomials. In Section \ref{sec:bispect}, the recurrence relation of the Askey polynomials was recovered simply as a limit from that of the Hahn $R_I$ polynomials. A natural question that remains is to understand how the biorthogonality relations of both polynomials can be connected. We plan to examine theses aspects in the future.  

\paragraph{Acknowledgments.} The research of LV is supported by a Discovery Grant from the Natural Sciences and Engineering Research Council (NSERC) of Canada. MZ holds an Alexander--Graham--Bell graduate scholarship from NSERC.

\end{document}